\RequirePackage{iftex}\ifPDFTeX\pdfoutput=1\fi
\documentclass[11pt]{article}

\usepackage[T1]{fontenc}
\usepackage[utf8]{inputenc}
\IfFileExists{lmodern.sty}{\usepackage{lmodern}}{\usepackage{ae,aecompl}}

\usepackage[expansion=false]{microtype}
\usepackage[margin=1.15in]{geometry}
\usepackage{amsmath,amssymb,amsthm}
\usepackage{mathtools}
\usepackage{booktabs}
\usepackage{array}
\usepackage{enumitem}
\usepackage{listings}
\usepackage{xcolor}
\usepackage[colorlinks=true,linkcolor=blue!55!black,citecolor=blue!55!black,urlcolor=blue!55!black]{hyperref}
\usepackage[capitalize,nameinlink]{cleveref}

\newtheorem{theorem}{Theorem}[section]
\newtheorem{lemma}[theorem]{Lemma}

\theoremstyle{definition}

\theoremstyle{remark}

\definecolor{leankw}{RGB}{0,32,128}
\definecolor{leancm}{RGB}{96,96,96}
\definecolor{leanbg}{RGB}{248,248,246}
\lstdefinelanguage{lean}{
  keywords={theorem,lemma,def,abbrev,structure,instance,namespace,end,import,
    open,by,fun,match,with,if,then,else,let,intro,exact,have,show,set_option,in,
    where,deriving,do,return,forall,exists},
  sensitive=true,
  comment=[l]{--},
  morecomment=[s]{/-}{-/},
  string=[b]",
}
\newcommand{\code}[1]{\texttt{#1}}

\newcommand{\Powerful}{\code{Powerful}}

\title{\bfseries A Kernel-Certified Verification of the\\
Erd\H{o}s--Mollin--Walsh Conjecture below $10^{14}$}

\author{Ibrahim Mian \qquad Shayaan Siddique\\[2pt]
  \normalsize Millennium Research\\
  \normalsize\texttt{\{ibby,shayaan\}@millenniumresearch.ai}\\
  \normalsize\texttt{ibrahimnmian@gmail.com}, \texttt{shayaansiddique02@gmail.com}}

\date{July 2026}

\begin{document}
\maketitle

\begin{abstract}
Erd\H{o}s problem 364 asks whether three consecutive \emph{powerful}
numbers exist, where $n$ is powerful if $p \mid n$ implies $p^2 \mid n$.
Erd\H{o}s (1976) and, independently, Mollin and Walsh (1986) conjectured
that none do; the $abc$ conjecture implies at most finitely many. The
conjecture remains open. We present the first verification of the
conjecture at any finite bound that is checked end to end by a proof
kernel: machine-checked theorems in Lean~4 establishing that no triple of
consecutive powerful numbers exists below $10^{12}$ and below $10^{14}$,
with the axiom footprint of both theorems being exactly
$\{\code{propext}, \code{Classical.choice}, \code{Quot.sound}\}$ --- no
\code{sorry}, no \code{native\_decide}, no trusted external computation.
The statements are phrased in the byte-identical vocabulary of the
\code{google-deepmind/formal-conjectures} formalization of the problem,
and we prove abstractly that the open conjecture implies each bounded
form, pinning the statement correspondence. The proof reduces the search
to odd numbers via a mod-$4$ argument, represents every odd powerful
number as $a^2 b^3$ with $a,b$ odd and $b$ squarefree, enumerates all odd
powerful numbers by a fueled, kernel-reducible generator whose
completeness is proved once and instantiated across $3{,}524$ per-interval
Boolean certificates, and eliminates the seven surviving distance-$2$
pairs (the members of OEIS A076445 below $10^{14}$) by explicit
non-powerfulness witnesses. Every certificate's expected values are
computed independently by a Python engine, so each kernel-checked equality
doubles as a cross-implementation agreement. Total certified kernel time
is roughly $46$ CPU-hours. Larger \emph{uncertified} computations exist
(exhaustive to $10^{22}$; conditionally to about $7.38 \times 10^{28}$);
our contribution is not a computational record but the elimination of
trusted enumeration code from the evidence chain.
\end{abstract}

% ===========================================================================
\section{Introduction}\label{sec:intro}

A positive integer $n$ is \emph{powerful} (also \emph{squarefull}) if
every prime dividing $n$ divides it at least twice: $p \mid n$ implies
$p^2 \mid n$. Powerful numbers are exactly the integers of the form
$a^2 b^3$ \cite{Golomb1970,OEISA001694}. Pairs of consecutive powerful numbers exist
in abundance --- $(8,9)$ is the smallest, and Pell-type constructions give
infinitely many --- but no triple $(n, n{+}1, n{+}2)$ of consecutive
powerful numbers has ever been found. Erd\H{o}s conjectured that none
exists \cite{Erdos1976}; Mollin and Walsh independently made the same
conjecture a decade later \cite{MollinWalsh1986}. The problem appears as
B16 in Guy's \emph{Unsolved Problems in Number Theory} \cite{GuyB16} and
as problem 364 in Bloom's Erd\H{o}s problem catalogue
\cite{ErdosProblems364}. It is a well-known consequence of the $abc$
conjecture that at most finitely many such triples exist; unconditionally
the question is wide open, with recent partial results ruling out triples
of restricted shapes \cite{Beckon2019,Chan2025,She2025}.

Computational evidence for the conjecture is substantial but rests
entirely on \emph{trusted code}. Donovan Johnson's 2011 enumeration of
consecutive powerful pairs, recorded in the b-file of OEIS A060355
\cite{OEISA060355}, is exhaustive below $10^{22}$ but uncertified and
methodologically undocumented. The thirteen known pairs of powerful
numbers differing by exactly $2$ (OEIS A076445
\cite{OEISA076445}) reach about $7.38 \times 10^{28}$, but the search is
exhaustive only if that list is complete, which has not been established;
indeed the OEIS entry was renamed between 2013 and 2014 to drop the word
``consecutive,'' retracting the implication that no powerful number lies
strictly between the members of a listed pair. Alekseyev's conjectured
33-term extension (2012) reaches about $8.1 \times 10^{66}$ and is
explicitly not known to be exhaustive. In every tier, the reader must
trust an unpublished program, its compiler, and its operator.

This paper reports a different kind of evidence. We have produced
machine-checked proofs, in the Lean~4 proof assistant \cite{Lean4} over
the \code{mathlib} library \cite{mathlib}, of the following two theorems,
where \Powerful{} is carried byte-identically from the
\code{google-deepmind/formal-conjectures} repository's formal statement of
problem 364 \cite{FormalConjectures}:

\begin{lstlisting}
theorem Erdos364.no_powerful_triple_up_to_1e12 :
    ∀ n : ℕ, n + 2 ≤ 10^12 →
      ¬ (Powerful n ∧ Powerful (n+1) ∧ Powerful (n+2))

theorem Erdos364.no_powerful_triple_up_to_1e14 :
    ∀ n : ℕ, n + 2 ≤ 10^14 →
      ¬ (Powerful n ∧ Powerful (n+1) ∧ Powerful (n+2))
\end{lstlisting}

Both compile in Lean 4.30.0 against \code{mathlib} v4.30.0, and
\code{\#print axioms} reports for each exactly
\[
  \{\code{propext},\ \code{Classical.choice},\ \code{Quot.sound}\},
\]
the three axioms of Lean's standard classical foundation. In particular
there is no \code{sorry} (no admitted subgoal), no \code{native\_decide}
(no reliance on the compiled evaluator, which would extend the trusted
base to the Lean compiler and C toolchain), and no custom axiom. Every
arithmetic fact consumed by the proof --- the enumeration of powerful
numbers, integer square roots, squarefreeness tests, merges, scans, and
tilings --- is evaluated by the Lean kernel itself via \code{decide},
reducing closed Boolean terms to \code{true}.

\paragraph{What is and is not claimed.}
We state the claim's two tiers plainly, as the accompanying repository
does. This is a \emph{certification} result, not a computational record:
the uncertified computations above reach much farther. What the present
work adds is that below $10^{14}$, every step of the argument is checked
by a proof kernel rather than trusted contributor code, and the certified
statement is syntactically the restriction of the community's canonical
formalization of the open conjecture. To our knowledge this is the first
kernel-certified verification of the Erd\H{o}s--Mollin--Walsh conjecture
at any finite bound.

\paragraph{Contributions.}
\begin{enumerate}[leftmargin=2em]
\item The two theorems above, with axiom records, build logs, and
  reproduction scripts committed alongside the sources
  (\cref{sec:repro}).
\item A proof architecture (\cref{sec:architecture}) that factors a
  large kernel computation into a once-proved parametric soundness
  theorem plus thousands of small Boolean certificates, keeping
  per-process kernel memory bounded ($\le 7.9$\,GB) where a monolithic
  \code{decide} fails below $10^9$.
\item A \emph{reference-table} optimization (\cref{sec:table}) that
  amortizes the dominant per-certificate cost (squarefreeness retesting)
  into a single kernel-verified table, yielding a measured $23.8\times$
  end-to-end speedup and making the $10^{14}$ rung affordable
  ($\approx 30$ CPU-hours).
\item A methodology of \emph{dual-implementation certificates}
  (\cref{sec:trust}): every expected value baked into a certificate is
  computed independently by a Python engine, so each kernel-checked
  equality is simultaneously a cross-implementation agreement; plus an
  axiom gate run in continuous integration and a freeze discipline
  binding the attested axiom records to the exact sources.
\item A statement-level bridge (\cref{sec:bridge}) proving abstractly
  that the upstream open conjecture implies each bounded theorem, so the
  certified results are exactly finite fragments of the conjecture, in
  the same words.
\end{enumerate}

All sources, certificates, logs, and documentation are public under the
Apache~2.0 license at
\begin{center}\url{https://github.com/ibrahimmian36/Optio}\end{center}

% ===========================================================================
\section{Background and related work}\label{sec:related}

\subsection{Powerful numbers and problem 364}

Golomb \cite{Golomb1970} initiated the systematic study of powerful
numbers and observed the representation $n = a^2 b^3$ with $b$
squarefree, unique in that normal form. Sentance \cite{Sentance1981}
studied occurrences of consecutive \emph{odd} powerful numbers, i.e.\
pairs differing by $2$, the objects our scan isolates. Mollin and Walsh
\cite{MollinWalsh1986} developed the theory of powerful pairs via
Pell equations and posed the triple conjecture independently of Erd\H{o}s
\cite{Erdos1976}. Recent work constrains hypothetical triples: Beckon
\cite{Beckon2019} gives a mod-$36$ constraint; Chan \cite{Chan2025} and
She \cite{She2025} rule out triples of special shapes near cubes. None of
these approaches yields a finite verification bound; they are
complementary to the present work.

\subsection{Machine-checked large computations}

Landmark formal developments have folded heavy computation into
kernel-checked proofs, notably the Four-Color Theorem in Coq
\cite{Gonthier2008} and the Kepler conjecture in HOL Light and Isabelle
\cite{Hales2017}. Within Lean, large \code{decide}-based certificates are
increasingly common, but two engineering constraints shape any such
effort: kernel reduction does not unfold well-founded recursion (so
library functions like \code{Nat.sqrt} are unusable inside
\code{decide}), and reduction of a single large closed term can exhaust
memory, since the kernel retains the whole term graph. Our architecture
--- structural fuel recursion in accumulator style, and many small
certificates checked in separate processes, composed by a parametric
soundness theorem --- is a direct response to both constraints, and we
document the measured wall (a monolithic check succeeds in $12.6$\,s at
$10^8$ but thrashes past $1.7$\,GB of term graph at $10^9$) that forced
it.

\subsection{Formalized conjecture statements}

The \code{google-deepmind/formal-conjectures} repository
\cite{FormalConjectures} maintains formal statements of open problems,
including Erd\H{o}s 364. Formal \emph{statements} of open problems are
only useful anchors if downstream results speak their exact vocabulary;
otherwise a translation layer must itself be trusted. We therefore carry
the upstream \Powerful{} definition byte-identically (up to removal of
module-system markers our file layout does not need), pin the upstream
commit (\code{e923379e6}, 2026-07-21) in the source header, and prove the
implication from the upstream conjecture to each bounded theorem
(\cref{sec:bridge}).

% ===========================================================================
\section{Formal setting}\label{sec:setting}

\subsection{The statement vocabulary}

The upstream definition, reproduced in our \code{Erdos364/Defs.lean}, is:

\begin{lstlisting}
namespace Nat

/-- `n` is `k`-full: every prime dividing `n` does so at least `k` times. -/
def Full (k : ℕ) (n : ℕ) : Prop := ∀ p ∈ n.primeFactors, p^k ∣ n

/-- A powerful number: every prime in `n` appears squared. -/
abbrev Powerful : ℕ → Prop := (2).Full

end Nat
\end{lstlisting}

Two conventions are inherited with the definition: $0$ and $1$ are both
\Powerful{} (their \code{primeFactors} sets are empty). This is harmless
for the triple statement, since $2$ is not powerful, so no triple can
begin at $0$ or $1$ anyway. A \code{Decidable} instance for
\code{Nat.Full} follows by unfolding, which is what allows the witness
lemmas of \cref{sec:witness} to discharge divisibility side conditions by
\code{decide}.

\subsection{Trust model}

Lean's kernel accepts a proof term only if it type-checks against the
axioms in scope. The \code{decide} tactic produces a proof of a decidable
proposition $P$ by exhibiting the evaluation of its decision procedure to
\code{true}; the variant \code{decide +kernel} forces the reduction to be
performed by the kernel itself. We forbid \code{native\_decide}
throughout: it delegates evaluation to compiled native code, which is
dramatically faster but enlarges the trusted base from the kernel to the
whole compiler pipeline. Our axiom gate (\cref{sec:trust}) mechanically
enforces the absence of \code{native\_decide} (no \code{\_native}
constants), the absence of \code{sorryAx}, and the containment of every
theorem's axioms in $\{\code{propext}, \code{Classical.choice},
\code{Quot.sound}\}$.

% ===========================================================================
\section{Proof architecture}\label{sec:architecture}

The proof composes six mathematical steps, each proved once and
parametrically, with the per-interval computations isolated into Boolean
certificates. \Cref{fig:pipeline} summarizes the reduction. Throughout,
$X$ denotes the verification bound ($10^{12}$ or $10^{14}$).

\begin{figure}[t]
\centering
\begin{tabular}{c}
triple $(n,n{+}1,n{+}2)$ of powerful numbers, $n+2 \le X$ \\[2pt]
$\Downarrow$ {\small mod-$4$ reduction (\cref{sec:mod4})} \\[2pt]
$n$ odd; $(n, n{+}2)$ an \emph{odd powerful pair} at distance $2$ \\[2pt]
$\Downarrow$ {\small tiling (\cref{sec:tiling})} \\[2pt]
the pair lands in the window $[lo_i, hi_i{+}2]$ of a unique chunk \\[2pt]
$\Downarrow$ {\small enumeration completeness $+$ adjacency (\cref{sec:generator,sec:sorted})} \\[2pt]
$n$ appears in that chunk's kernel-verified expected-pair list \\[2pt]
$\Downarrow$ {\small confinement check (\cref{sec:assembly})} \\[2pt]
$n \in \{25,\, 70225,\, 130576327,\, 189750625,\, 512706121225,$ \\
$\phantom{n \in \{} 13837575261123,\, 99612037019889\}$ \\[2pt]
$\Downarrow$ {\small witness kills (\cref{sec:witness})} \\[2pt]
$\lnot\,\Powerful(n{+}1)$ --- contradiction
\end{tabular}
\caption{The reduction chain. Double arrows are once-proved lemmas;
the boxed middle steps consume the $320$ (resp.\ $3{,}204$)
per-chunk Boolean certificates.}
\label{fig:pipeline}
\end{figure}

\subsection{Step 1: the mod-4 reduction}\label{sec:mod4}

\begin{lemma}[\code{not\_powerful\_of\_two\_mod\_four}]
If $n \equiv 2 \pmod 4$ then $n$ is not powerful.
\end{lemma}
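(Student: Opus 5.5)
The argument goes directly through the definition of \Powerful{}, i.e.\ $\code{Nat.Full}\ 2$, instantiated at the prime $p=2$. Assume $n \equiv 2 \pmod 4$ and, for contradiction, that $n$ is powerful. First I show that $2 \in n.\code{primeFactors}$. This needs three facts: $2$ is prime (\code{Nat.prime\_two}); $2 \mid n$, because $n \bmod 4 = 2$ gives $n = 4k+2 = 2(2k+1)$; and $n \neq 0$, because $0 \bmod 4 = 0 \neq 2$. Membership then follows from \code{Nat.mem\_primeFactors}. Applying the powerful hypothesis to $p=2$ yields $2^2 = 4 \mid n$, so $n \bmod 4 = 0$. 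This contradicts $n \bmod 4 = 2$.

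In Lean I would state the hypothesis as \code{n \% 4 = 2} and unfold \code{Nat.Powerful} and \code{Nat.Full}. The divisibility $2 \mid n$ I would obtain from \code{Nat.dvd\_of\_mod\_eq\_zero} applied to the fact $n \% 2 = 0$. That fact can be derived with \code{omega} from \code{n \% 4 = 2}, possibly after supplying \code{Nat.mod\_mod\_of\_dvd}. The last step is to convert $4 \mid n$ into $n \% 4 = 0$ using \code{Nat.eq\_zero\_of\_dvd\_of\_lt} or \code{Nat.mod\_eq\_zero\_of\_dvd}, after which \code{omega} closes the goal.

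I do not expect any step to be a real obstacle. The only likely friction is bookkeeping: making sure the \code{abbrev} unfolds so that the hypothesis can be applied to $2$, and that the term $2^2$ normalizes to $4$, which \code{norm\_num} handles. Since $n$ is arbitrary, the proof must not use \code{decide}.
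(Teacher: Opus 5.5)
Your proposal is correct and is essentially the paper's argument: $2$ lies in \code{n.primeFactors} (it is prime, divides $n$, and $n \neq 0$), so $2$-fullness would force $4 \mid n$, contradicting $n \equiv 2 \pmod 4$. For the last step, \code{Nat.mod\_eq\_zero\_of\_dvd} is the lemma that applies directly (\code{Nat.eq\_zero\_of\_dvd\_of\_lt} would require $n < 4$), and \code{omega} can also take $4 \mid n$ directly as a hypothesis.
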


\begin{lemma}[\code{odd\_of\_powerful\_triple}]\label{lem:odd}
If $n$, $n+1$, $n+2$ are all powerful then $n$ is odd.
\end{lemma}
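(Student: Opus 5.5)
The plan is to argue by contradiction, using the preceding lemma \code{not\_powerful\_of\_two\_mod\_four}. Suppose $n$, $n+1$, $n+2$ are all powerful and $n$ is even. Then $n \equiv 0$ or $n \equiv 2 \pmod 4$. If $n \equiv 2 \pmod 4$, the lemma gives that $n$ is not powerful, which is a contradiction. If $n \equiv 0 \pmod 4$, then $n+2 \equiv 2 \pmod 4$, and the same lemma says $n+2$ is not powerful, again a contradiction. Hence $n$ is odd. The powerfulness of $n+1$ is never used; only the two even-offset members matter.

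For completeness, here is the argument behind the preceding lemma, since the proof leans on it. If $n \equiv 2 \pmod 4$, then $2 \mid n$ and $n \neq 0$, so $2 \in n.\code{primeFactors}$. Powerfulness would then force $4 \mid n$, contradicting $n \bmod 4 = 2$.

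In Lean the steps are as follows. Case on \code{Nat.even\_or\_odd n}; the odd branch is immediate. In the even branch, obtain $n \bmod 4 \in \{0,2\}$, for instance by \code{omega} from $n \bmod 2 = 0$ after case-splitting on $n \bmod 4 < 4$. Then apply the lemma either to $n$ or to $n+2$, discharging $(n+2) \bmod 4 = 2$ by \code{omega}.

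There is no real obstacle here. The only point needing care is the bookkeeping of residues, namely translating ``even'' into the two residue classes mod $4$ and shifting by $2$. \code{omega} handles this once the facts are stated in terms of \code{\%}. The only other subtlety is the convention that $0$ is powerful. It does no harm, because $n=0$ falls in the case $n \equiv 0 \pmod 4$, and the contradiction comes from $n+2 = 2$ not being powerful.
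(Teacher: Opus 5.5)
Your proof is correct and matches the paper's argument. If $n$ were even, one of $n$, $n+2$ would be $\equiv 2 \pmod 4$ and hence not powerful by \code{not\_powerful\_of\_two\_mod\_four}, and your justification of that lemma ($2$ is a prime factor but $4 \nmid n$) is the same one the paper gives.
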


\begin{proof}[Proof sketch]
A number $\equiv 2 \pmod 4$ has $2$ among its prime factors while $4$
does not divide it, refuting $2$-fullness. If $n$ were even, then one of
$n$, $n+2$ is $\equiv 2 \pmod 4$.
\end{proof}

Consequently a triple forces $(n, n{+}2)$ to be a pair of \emph{odd}
powerful numbers at distance exactly $2$ --- precisely the objects
enumerated by OEIS A076445 --- and the search space halves before any
computation begins.

\subsection{Step 2: the representation lemma}\label{sec:representation}

The one lemma with genuine mathematical content underwrites the
completeness of the enumeration:

\begin{lemma}[\code{exists\_odd\_sq\_mul\_cube}]\label{lem:rep}
Every odd powerful $m$ admits a representation $m = a^2 b^3$ with $a$,
$b$ odd and $b$ squarefree.
\end{lemma}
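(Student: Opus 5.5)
The plan is to build $a$ and $b$ prime by prime from the factorization of $m$, and then use oddness of $m$ to get oddness of $a$ and $b$. We will work with $m = \prod_{p \mid m} p^{e_p}$, where every exponent satisfies $e_p \ge 2$ because $m$ is \Powerful{}. For each prime $p \mid m$ we choose
\[
  f_p = 1 \text{ if } e_p \text{ is odd}, \qquad f_p = 0 \text{ if } e_p \text{ is even}.
\]
We then set $b = \prod_{p} p^{f_p}$, the product of the primes with odd exponent, and $a = \prod_p p^{(e_p - 3f_p)/2}$. The exponent $e_p - 3f_p$ is even and nonnegative: when $e_p$ is odd we have $e_p \ge 3$, since $e_p \ge 2$. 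This is the only place the hypothesis that $m$ is powerful enters. It follows immediately that $a^2 b^3 = m$ and that $b$ is squarefree, being a product of distinct primes.

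Oddness needs little extra work. Since $m$ is odd, $2 \nmid m$, so $2$ is not a prime factor of $m$. Both $a$ and $b$ divide $m$ (for $b$, because $b \mid b^3 \mid m$), so neither is divisible by $2$. The case $m = 1$ (and $m=0$ is excluded by oddness) gives $a = b = 1$.

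For the Lean formalization we would avoid explicit products over \code{primeFactors} and proceed differently. The natural choice is mathlib's decomposition $m = s \cdot r^2$ with $s$ squarefree (\code{Nat.sq\_mul\_squarefree}), applied not to $m$ but structurally, or alternatively strong induction on $m$. We prefer the factorization route. Define $b := \prod_{p \in m.\mathrm{primeFactors},\ \mathrm{Odd}(v_p(m))} p$ and $a$ via \code{Nat.factorization} as a finsupp with values $(e_p - 3f_p)/2$. We then prove $a^2b^3 = m$ by \code{Nat.eq\_of\_factorization\_eq}, comparing $p$-adic valuations pointwise. Squarefreeness of $b$ follows from it being a product of distinct primes (\code{Nat.squarefree\_iff\_nodup\_primeFactorsList} or \code{Finset.prod\_primes\_squarefree}-style lemmas).

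The main obstacle is the pointwise valuation computation. It requires showing $v_p(m) \ge 2$ from \code{Nat.Full} (via \code{Nat.Prime.pow\_dvd\_iff\_le\_factorization}), and then doing the parity case split with natural-number subtraction and division. This is bookkeeping rather than mathematics, but it is where most of the proof effort will go.
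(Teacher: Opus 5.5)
Your argument is correct, but it takes a different route from the paper. You build $a$ and $b$ directly from the exponents $e_p$ with a parity split. The paper applies mathlib's square-times-squarefree decomposition directly to $m$, which is the option you set aside. It writes $m = s^2 f$ with $f$ squarefree and then proves a single divisibility. For a prime $p \mid f$, powerfulness gives $v_p(m) = 2v_p(s) + 1 \ge 2$, which forces $v_p(s) \ge 1$. Since $v_p(f) \le 1$ for squarefree $f$, comparing factorizations gives $f \mid s$. Writing $s = fc$ then gives $m = c^2 f^3$, and oddness is inherited from $m$ exactly as in your proof.

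Both constructions yield the same $b$, because the squarefree part of $m$ is the product of the primes with odd exponent. The difference is therefore one of proof engineering. The paper gets the square and squarefree parts from the library and needs only the comparison $v_p(f) \le 1 \le v_p(s)$. It thereby avoids building $a$ from a finsupp, and the truncated subtraction and halving in $(e_p - 3f_p)/2$, which you correctly identify as the bulk of your effort. Your version, in exchange, is self-contained and makes the canonical normal form explicit. That would also yield uniqueness, which the paper deliberately does not need.
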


\begin{proof}[Proof sketch]
\code{mathlib}'s square-times-squarefree decomposition writes
$m = s^2 f$ with $f$ squarefree. For any prime $p \mid f$, the exponent
of $p$ in $m$ is $2 v_p(s) + 1$; powerfulness makes this at least $2$,
hence $v_p(s) \ge 1$, so $f \mid s$ (comparing factorizations).
Substituting $s = f c$ gives $m = c^2 f^3$. Oddness of $c$ and $f$ is
inherited from $m$ since divisors of odd numbers are odd. The proof is
classical bookkeeping over \code{mathlib}'s \code{Nat.factorization}
API. Uniqueness is deliberately not needed: duplicate entries in the
enumeration are harmless, because merging keeps duplicates adjacent and
the scan logic tolerates them.
\end{proof}

\subsection{Step 3: a fueled, kernel-reducible enumerator}\label{sec:generator}

The enumerator must run \emph{inside} the kernel, which imposes a strict
discipline: no well-founded recursion (it does not reduce), so every loop
is structural recursion on an explicit fuel argument, in accumulator
style to bound pending-term depth. The primitives are:

\begin{itemize}[leftmargin=2em]
\item \code{isqrt}: integer square root by binary search with constant
  fuel $64$, sufficient for all $n < 2^{64}$. Its correctness
  ($r^2 \le n < (r{+}1)^2$) is proved by induction on fuel
  (\code{isqrtAux\_correct}), together with the Galois-style
  characterization \code{le\_isqrt\_iff}: $a \le \code{isqrt}\ n
  \leftrightarrow a^2 \le n$.
\item \code{sqfreeAux}: trial-square squarefreeness, exact when run with
  fuel \code{isqrt b}, per lemma\\ \code{sqfreeAux\_isqrt\_iff}.
\item \code{genOddRangeAux}: the ascending stream $(2(k_0{+}i){+}1)^2
  \cdot b^3$ for $i < \mathit{cnt}$, i.e.\ the odd values of $a$ with
  $a^2 b^3$ in the target window.
\item \code{outerRangeAux}: one stream per odd squarefree $b$ with
  $b^3 \le hi$, where the per-$b$ window of $a$ is computed by two
  \code{isqrt} calls; \code{mergeAux}/\code{mergeAll}: fueled balanced
  merging of the streams into one ascending list.
\end{itemize}

\begin{theorem}[chunk-level completeness]\label{thm:complete}
If $m$ is odd and powerful with $lo \le m \le hi$, and the fuel
parameter $kb$ satisfies $hi < (2kb{+}1)^3$, then $m$ is a member of
\code{oddPowerfulRange lo hi kb}.
\end{theorem}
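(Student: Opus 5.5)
The plan is to trace a given odd powerful $m \in [lo, hi]$ through the three layers of the generator: representation, then outer loop, then inner stream, then merge. Each layer gets a membership lemma proved by induction on its fuel argument.

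\textbf{Representation.} First apply \cref{lem:rep} to write $m = a^2 b^3$ with $a, b$ odd and $b$ squarefree. Since $m \ge 1$ (it is odd), we have $a, b \ge 1$. Hence $b^3 \le m \le hi < (2kb{+}1)^3$, which gives $b < 2kb + 1$. Writing $b = 2j + 1$, this yields $j < kb$. So $b$ lies in the range of odd candidates that \code{outerRangeAux} visits when run with fuel $kb$.

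\textbf{Outer loop.} I will prove by induction on fuel that \code{outerRangeAux} emits, for each odd $b'$ in its range, the stream for $b'$ whenever $b'$ is squarefree and $b'^3 \le hi$. The squarefreeness test is decided correctly by \code{sqfreeAux\_isqrt\_iff}. In accumulator style, the invariant says that the accumulator, followed by the streams still to be generated, contains every required stream. Our $b$ satisfies both conditions, so its stream $(2(k_0{+}i){+}1)^2 b^3$, $i < \mathit{cnt}$, is among the lists handed to \code{mergeAll}.

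\textbf{Inner stream.} Next I will show that $a$ falls in that stream's index window. Write $a = 2k+1$. The window endpoints come from two \code{isqrt} calls, roughly on $\lfloor hi/b^3 \rfloor$ and $\lceil lo/b^3 \rceil$. The main tool is \code{le\_isqrt\_iff}, which turns $a^2 b^3 \le hi$ into $a \le \code{isqrt}(hi / b^3)$, and similarly turns $lo \le a^2 b^3$ into a lower bound on $a$. Converting the bound on the odd number $a$ into the bound $k_0 \le k < k_0 + \mathit{cnt}$ on its index needs parity bookkeeping with floor divisions. I expect this to be the fiddliest arithmetic step, because off-by-one errors in the ceiling of $lo/b^3$ and in the "largest odd $\le r$" adjustment must match the code exactly. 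I would isolate it as a standalone lemma, proved by \code{omega} after extracting the \code{isqrt} facts. A separate induction on $\mathit{cnt}$ for \code{genOddRangeAux} then shows that index $i = k - k_0$ produces exactly $a^2 b^3 = m$.

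\textbf{Merge.} Finally, \code{mergeAux} and \code{mergeAll} need a membership-preservation lemma: every element of every input list appears in the output. Because the merge is fueled, it can silently truncate when fuel runs out. So the lemma must assume the fuel is at least the total input length for \code{mergeAux}, and at least the depth of the balanced pairing for \code{mergeAll}. I expect this fuel adequacy to be the \textbf{main obstacle}. The approach is to first check how \code{oddPowerfulRange} chooses these fuels, most likely as the sum of lengths and a logarithm or the list count. Then I would prove $x \in l_1 \lor x \in l_2 \to x \in \code{mergeAux}\ f\ l_1\ l_2$ for $f \ge |l_1| + |l_2|$ by induction on $f$, generalizing over both lists and any accumulator. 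After that I lift it to \code{mergeAll}, using that each pairing round preserves total membership and that the number of rounds fits the fuel. Chaining the representation, outer-loop, inner-window and merge lemmas gives $m \in \code{oddPowerfulRange lo hi kb}$.
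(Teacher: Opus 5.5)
Your proposal is correct and follows the paper's own argument. The representation lemma gives $m = a^2b^3$. The cube bound puts $b$ in the outer loop, where \code{sqfreeAux\_isqrt\_iff} admits it, and \code{le\_isqrt\_iff} places $a$ in the per-$b$ window. Membership then survives the merge, which the paper cites as \code{mem\_mergeAll} and where your fuel-adequacy bookkeeping belongs. One hypothesis is left implicit both in your sketch and in the informal statement: the constant-fuel-$64$ \code{isqrt} is exact only below $2^{64}$, so you will need a bound like $hi < 2^{64}$ when invoking \code{le\_isqrt\_iff} (cf.\ the hypothesis \code{hhi} of \code{checkChunk\_sound}).
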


\begin{proof}[Proof sketch]
By \cref{lem:rep}, $m = a^2 b^3$ with $a, b$ odd and $b$ squarefree.
From $b^3 \le m \le hi < (2kb{+}1)^3$ the generator's outer loop visits
$b$, and the exact squarefreeness test admits it; the \code{isqrt}
characterization places $a$ inside the generated window. Membership
survives merging (\code{mem\_mergeAll}).
\end{proof}

The certificate for each chunk supplies the hypotheses of
\cref{thm:complete} as \emph{literal} side conditions --- Boolean
inequalities among the chunk's numeric parameters --- checked by a single
\code{decide} over the chunk table (\cref{sec:assembly}).

\subsection{Step 4: sortedness, adjacency, and the scan}\label{sec:sorted}

The pair-finding argument is where sortedness pays off:

\begin{lemma}[adjacency]\label{lem:adj}
In a sorted list all of whose members are odd, if $m$ and $m+2$ are both
members, then some \emph{adjacent} pair of entries equals $(m, m{+}2)$.
\end{lemma}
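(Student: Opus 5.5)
We prove the adjacency lemma directly from the sortedness hypothesis, reading ``sorted'' as non-decreasing (the merged list may contain duplicates, as noted after \cref{lem:rep}). The idea is to look at the \emph{last} occurrence of $m$ in the list and show that the entry right after it must be $m+2$. Let the list be $L = [x_0, x_1, \dots, x_{N-1}]$. Since $m \in L$, the index set $\{i : x_i = m\}$ is nonempty; let $i$ be its maximum. Since $m+2 \in L$, pick $j$ with $x_j = m+2$.

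First we show $i+1 < N$. Because $x_j = m+2 > m = x_i$ and the list is non-decreasing, we cannot have $j \le i$, so $j > i$. In particular $j \ge i+1$, hence index $i+1$ exists.

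Next we pin down $x_{i+1}$. Monotonicity gives $m = x_i \le x_{i+1} \le x_j = m+2$. Maximality of $i$ gives $x_{i+1} \ne m$, so $x_{i+1} \in \{m+1, m+2\}$. Every entry is odd and $m$ is odd (it is a member), so $m+1$ is even and is excluded. Hence $x_{i+1} = m+2$, and $(x_i, x_{i+1}) = (m, m+2)$ is the required adjacent pair.

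In Lean I would not index into the list. Instead I would prove a statement of the form ``$m \in l$, $m+2 \in l$, $l$ is \code{Sorted} $(\le)$ and all entries are odd $\Rightarrow$ $(m, m+2) \in$ \code{l.zip l.tail}'' by induction on $l$. With $l = x :: t$ there are three cases.
\begin{itemize}[leftmargin=2em]
\item If $x = m$ and $m \in t$, apply the induction hypothesis to $t$.
\item If $x = m$ and $m \notin t$, then $t$ is nonempty, since $m+2 \in t$. Its head $y$ satisfies $m \le y \le m+2$ by sortedness, $y \ne m$, and $y$ is odd, so $y = m+2$.
\item If $x \ne m$, then $m \in t$, and also $m+2 \in t$, because $x \le m < m+2$. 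Apply the induction hypothesis to $t$.
\end{itemize}
The main obstacle is this case bookkeeping rather than any mathematics: every case needs the sortedness facts ($x \le$ every element of $t$, and $t$ itself sorted), and the parity exclusion $y \ne m+1$ needs a small \code{omega}-style argument from $y \bmod 2 = 1$ and $m \bmod 2 = 1$.
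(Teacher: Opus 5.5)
Your proof is correct and takes essentially the same approach as the paper: sortedness leaves only $m+1$ as a possible value between $m$ and $m+2$, and parity excludes it. Taking the last occurrence of $m$ is a precise way of handling the duplicates, which the paper's sketch only remarks ``shift which adjacent pair witnesses the claim''; your three-case induction for the Lean version is also sound.
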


\begin{proof}[Proof sketch]
The only value that could separate $m$ from $m+2$ in a sorted list is
$m+1$, which is even and hence absent. Duplicates of $m$ or $m+2$ only
shift which adjacent pair witnesses the claim.
\end{proof}

The linear scan \code{scanGap2Aux} collects exactly the adjacent pairs
at distance $2$, per lemma \code{scanGap2Aux\_catches}. Composing
\cref{thm:complete,lem:adj} with the sortedness and parity invariants of
the merge (\code{sorted\_mergeAll}, \code{stream\_all\_odd}, and
companions) yields
the once-proved parametric soundness theorem that every chunk certificate
instantiates for free:

\begin{lstlisting}
theorem checkChunk_sound {lo hi kb cnt : Nat} {exp : List Nat}
    (hlo : 1 ≤ lo) (hhi : hi + 2 < 2 ^ 64)
    (hkb : hi + 2 < (2 * kb + 1) * (2 * kb + 1) * (2 * kb + 1))
    (hkb40 : kb ≤ 2 ^ 40)
    (hcheck : checkChunk lo hi kb cnt exp = true) :
    ∀ m : Nat, Odd m → m.Powerful → (m + 2).Powerful →
      lo ≤ m → m + 2 ≤ hi + 2 → m ∈ exp
\end{lstlisting}

That is: a single \code{true} Boolean --- comparing the enumeration's
length and its gap-$2$ scan against expected literals --- certifies that
\emph{every} odd powerful pair in the chunk's window appears in the
expected list. The direction matters: the certificate needs only
completeness of the capture (no pair escapes), not soundness of every
listed entry, since listed entries are subsequently killed anyway.

\subsection{Step 5: tiling and stitching}\label{sec:tiling}

The chunks $[lo_i, hi_i]$ must tile $[1, X]$ exactly. A single Boolean
fold \code{tiles} checks $lo_1 = 1$, $lo_{i+1} = hi_i + 1$, and
$hi_{\mathrm{last}} = X$ over the literal boundary table; the covering
lemma \code{mem\_of\_tiles} then places every $1 \le m \le X$ in some
chunk. Because each chunk's scan window extends to $hi_i + 2$ (overlap
$2$), a pair $(m, m{+}2)$ near a boundary is caught by the chunk owning
$m$: no pair escapes at a seam.

\subsection{Step 6: witness kills}\label{sec:witness}

Below $10^{14}$, the union of all expected-pair lists is exactly the
first seven terms of A076445. Each pair's middle is refuted by an
explicit witness prime dividing it exactly once:

\begin{lemma}[\code{not\_powerful\_of\_witness}]
If $p$ is prime, $p \mid n$, and $p^2 \nmid n$, then $n$ is not
powerful.
\end{lemma}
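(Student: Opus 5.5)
The plan is to argue by contradiction directly from the definition of \Powerful{} as $(2).\code{Full}$, that is, $\forall q \in n.\code{primeFactors},\ q^2 \mid n$. Suppose $n$ is powerful, and let $p$ be prime with $p \mid n$ and $p^2 \nmid n$. The goal is to show $p \in n.\code{primeFactors}$. Once that is established, specializing the powerfulness hypothesis at $p$ gives $p^2 \mid n$, which contradicts the assumption $p^2 \nmid n$.

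The only real content is the membership $p \in n.\code{primeFactors}$. In \code{mathlib} this is characterized by \code{Nat.mem\_primeFactors}: $p \in n.\code{primeFactors} \leftrightarrow p.\code{Prime} \wedge p \mid n \wedge n \neq 0$. Primality and divisibility are hypotheses, so we only need $n \neq 0$. I would derive it from $p^2 \nmid n$: if $n = 0$, then every number divides $n$, in particular $p^2 \mid 0$, contradicting the hypothesis. So the proof is a case split on $n = 0$, with the zero case closed by \code{dvd\_zero}.

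There is no genuine obstacle here. The one point needing care is this degenerate case $n = 0$. The upstream convention makes $0$ powerful because its \code{primeFactors} set is empty, so the lemma would be false without some hypothesis excluding it. That hypothesis is exactly what $p^2 \nmid n$ supplies.

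In Lean the proof would read roughly as follows. Introduce the powerfulness hypothesis $h$. Obtain $n \neq 0$ by contraposition from $p^2 \nmid n$. Apply $h$ to the term \code{Nat.mem\_primeFactors.mpr} $\langle hp, hdvd, hn0\rangle$, and feed the resulting $p^2 \mid n$ to the negated hypothesis.

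This lemma is then instantiated seven times, once for each $n+1$ with $n$ in the confinement set, using a concrete witness prime. The side conditions $p \mid n+1$ and $p^2 \nmid n+1$ on numerals are closed by \code{decide}, and primality of $p$ by \code{norm\_num} or \code{decide} for small $p$. That instantiation belongs to the assembly step, not to this lemma.
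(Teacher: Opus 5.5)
Your proof is correct and is the argument the paper has in mind. The paper gives no separate proof of this lemma, but its mod-$4$ sketch runs the same way: show $p \in n.\code{primeFactors}$ (via \code{Nat.mem\_primeFactors}, with $n \neq 0$ forced by $p^2 \nmid n$), then specialize $2$-fullness at $p$ to obtain $p^2 \mid n$ and contradict the hypothesis.
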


\begin{table}[h]
\centering
\begin{tabular}{r r c}
\toprule
pair opener $n$ & middle $n+1$ & witness $p$ \\
\midrule
$25$ & $26$ & $2$ \\
$70{,}225$ & $70{,}226$ & $2$ \\
$130{,}576{,}327$ & $130{,}576{,}328$ & $29$ \\
$189{,}750{,}625$ & $189{,}750{,}626$ & $2$ \\
$512{,}706{,}121{,}225$ & $512{,}706{,}121{,}226$ & $2$ \\
$13{,}837{,}575{,}261{,}123$ & $13{,}837{,}575{,}261{,}124$ & $19$ \\
$99{,}612{,}037{,}019{,}889$ & $99{,}612{,}037{,}019{,}890$ & $2$ \\
\bottomrule
\end{tabular}
\caption{The seven odd powerful pairs below $10^{14}$ (A076445
$a(1)$--$a(7)$) and the witness prime refuting powerfulness of each
middle. Pairs opening $1 \bmod 4$ have even middles divisible by $2$
exactly once; the two pairs opening $3 \bmod 4$ need odd witnesses.}
\label{tab:witnesses}
\end{table}

Divisibility and primality side conditions are discharged by
\code{decide} and \code{norm\_num}; each kill is a two-line instance.

\subsection{Step 7: assembly}\label{sec:assembly}

The assembly theorem for the $10^{12}$ rung is stated \emph{conditionally}
on the conjunction of the chunk certificates,

\begin{lstlisting}
theorem no_powerful_triple_up_to_1e12_of
    (hall : ∀ e ∈ C12.table, ChunkSpec.check e = true) :
    ∀ n : ℕ, n + 2 ≤ 1000000000000 →
      ¬ (Nat.Powerful n ∧ Nat.Powerful (n + 1) ∧ Nat.Powerful (n + 2))
\end{lstlisting}

and its proof is exactly the chain of \cref{fig:pipeline}: mod-$4$
opener, tiling placement, literal side conditions by one table-level
\code{decide}, \code{checkChunk\_sound}, a table-level \code{decide}
confining every expected pair to the known list, and the witness kills.
Everything in the assembly file compiles in seconds without the
certificates. The unconditional theorem then lives in a separate module
importing the $320$ chunk certificates and discharging \code{hall};
building it re-checks every chunk in the kernel.

% ===========================================================================
\section{The reference-table optimization for $10^{14}$}\label{sec:table}

\subsection{Cost attribution}

Scaling from $10^{12}$ to $10^{14}$ naively would multiply cost past
affordability, so an attribution study preceded any optimization.
Per-chunk kernel cost was measured to track \emph{value magnitude}, not
entry count: chunk $0$ of $320$ costs $3.8$\,s while chunk $319$ costs
$242$--$305$\,s at essentially the same $\sim$2{,}500 entries. Five
instrumented kernel runs on a fixed chunk (baseline; length-only;
generation-only; generation with the squarefree guard removed; isolated
\code{isqrt} folds), each checked against an independently computed
Python mirror value, attributed the dominant cost to the per-chunk
squarefreeness retesting of all candidate cube bases $b$ and the per-$b$
binary searches on large literals --- work that is \emph{identical across
chunks} of a rung.

\subsection{The kernel-verified base table}

The optimization reifies that shared work once per rung.
\code{mkBTable}~$kb_T$ constructs the ascending list of odd squarefree
$b \le 2 kb_T - 1$; for $X = 10^{14}$, $kb_T = 23{,}208$. Soundness needs
exactly two facts, both proved: \emph{completeness} --- every odd
squarefree $b$ in range is a member (\code{mem\_mkBTable}; the
catastrophic direction if lost, since a missing $b$ silently drops part
of the enumeration) --- and \emph{parity} --- every member is odd
(\code{mkBTable\_all\_odd}; feeding the adjacency argument).
Squarefreeness of members is a cost concern rather than a soundness need
and is deliberately not proved: spurious members would only add entries,
which the expected-value equality would expose. The table itself is
committed as a literal (\code{bTable1e14}) and proved equal to
\code{mkBTable 23208} by a single one-time kernel evaluation; the
table-driven checker \code{checkChunkT} then walks the literal list
instead of retesting.

Trust in the optimized path was earned by \emph{revalidation}: all $320$
chunks of the already-certified $10^{12}$ rung were regenerated as
\code{checkChunkT} certificates with byte-identical boundaries, counts,
and expected lists (mechanically diffed against the certified table
before any kernel time was spent), and all $320$ passed --- $2{,}422$
kernel-seconds versus the certified $57{,}652$, a $23.8\times$ speedup,
with the certified path untouched.

\subsection{The $10^{14}$ certificate set}

The $10^{14}$ rung comprises $3{,}204$ table-driven certificates with
variable-width windows (narrower at the top end to bound memory), for
example:

\begin{lstlisting}
theorem chunk_0000 :
    Erdos364.Spike.checkChunkT 1 10310522 2500
      Erdos364.Spike.bTable1e14 [25, 70225] = true := by
  decide +kernel
\end{lstlisting}

The assembly (\code{no\_powerful\_triple\_up\_to\_1e14\_of}) is
conditional on exactly two kernel facts, both discharged in the final
module: the rung-table equality \code{bTable1e14 = mkBTable 23208}, and
the conjunction of all $3{,}204$ chunk certificates. The table-driven
soundness theorem \code{checkChunkT\_sound} takes the table's
completeness and parity facts as hypotheses, which the assembly derives
from \code{mem\_mkBTable} and \code{mkBTable\_all\_odd} through the
proved table equality.

% ===========================================================================
\section{Engineering, cost, and reproduction}\label{sec:cost}

\begin{table}[t]
\centering
\begin{tabular}{l r r}
\toprule
& $X = 10^{12}$ & $X = 10^{14}$ \\
\midrule
checker & \code{checkChunk} (per-chunk $kb$) & \code{checkChunkT} (shared table) \\
chunk certificates & $320$ & $3{,}204$ \\
kernel time (certificates) & $57{,}652$\,s ($\approx 16$ CPU-h) & $107{,}603$\,s ($\approx 30$ CPU-h) \\
max single chunk & $\approx 305$\,s & $61$\,s \\
peak per-process memory & $7.9$\,GB & $6.9$\,GB \\
pairs found & $5$ & $7$ \\
\bottomrule
\end{tabular}
\caption{Measured certified-computation cost per rung (committed logs in
\code{data/chunk\_runs/}). The one-time $10^{14}$ table-equality check
adds minutes of kernel work and requires lifting the elaborator's
default heartbeat ceiling.}
\label{tab:cost}
\end{table}

\Cref{tab:cost} summarizes the measured cost. The chunked design exists
because a monolithic \code{decide} hits a kernel memory wall between
$10^8$ ($12.6$\,s) and $10^9$ (term graph past $1.7$\,GB resident,
thrashing without completion); one Boolean equality per interval, each in
its own process, resets kernel memory per certificate. Certificates were
batch-checked by resumable drivers with bounded parallelism; the
$10^{14}$ build wants tens of GB free, and a committed pod script
(\code{scripts/pod\_final14.sh}) reproduces it on a 64\,GB machine.

\subsection{Reproduction}\label{sec:repro}

The soundness library --- every lemma through the conditional headline
theorems --- builds on an ordinary machine:

\begin{lstlisting}
lake exe cache get && lake build && scripts/axiom_gate.sh
\end{lstlisting}

The unconditional theorems require re-checking the certificate modules
($\approx 46$ CPU-hours total) on a large-memory machine. Their
\code{\#print axioms} outputs are committed
to the repository, in \code{data/chunk\_runs/}, with full build and
batch logs alongside.

\subsection{Run ledger and freeze discipline}

The development's run ledger (\code{docs/\allowbreak PHASE2\_LOG.md}) records every
failure encountered en route, including two reporting bugs found and
fixed in our own harness: a doc-comment/\allowbreak\code{set\_option} parse trap
that detached an option from its theorem, a default-heartbeat abort of
the table-equality check, and a driver script whose log filtering masked
that failure as success. After certification, the Lean sources of the
certified closure were \emph{frozen} as attested: the committed axiom
records attest builds of those exact sources, so cosmetic lint cleanups
(two known docstring slips, neither affecting any statement or proof) are
deliberately not applied, and all subsequent work (gate additions, audit,
bridge, CI) lands only in new files outside the certified closure.

% ===========================================================================
\section{The trust story}\label{sec:trust}

Three mutually reinforcing mechanisms carry the result's credibility.

\paragraph{Kernel checking with a minimal axiom footprint.}
Both headline theorems depend on exactly \code{propext},
\code{Classical.choice}, and \code{Quot.sound}. No \code{sorry}, no
\code{native\_decide}, no ad hoc axiom: the trusted base is Lean's kernel
and its standard classical axioms.

\paragraph{Dual implementation.}
Every literal a certificate compares against --- entry counts and
gap-$2$ pair lists per chunk, and the mirror values of the attribution
study --- is computed \emph{independently} by a Python engine
(type-checked under \code{mypy --strict}, linted, unit-tested). A
certificate can only evaluate to \code{true} if the kernel-reduced Lean
enumeration agrees with the Python enumeration on that chunk; the
$3{,}524$ green certificates therefore constitute $3{,}524$
cross-implementation agreements in addition to their role in the proof.

\paragraph{A mechanical gate in CI.}
The axiom gate checks a curated $61$-theorem manifest
(\code{\#print axioms} on the headline theorems and every named lemma of
the architecture), a mechanical $260$-theorem whole-library audit (so
nothing slips the manifest), and the committed certificate records;
continuous integration runs it on every push. The gate's scope is stated
honestly: it re-verifies the lemma library that builds within CI budgets,
while the certificate modules are attested by their committed pod-build
axiom records, since $3{,}524$ kernel-checked certificates exceed free CI
allowances.

% ===========================================================================
\section{The bridge to the open conjecture}\label{sec:bridge}

The upstream formalization states the conjecture as
\begin{lstlisting}
theorem erdos_364 :
    ¬ ∃ (n : ℕ), Powerful n ∧ Powerful (n + 1) ∧ Powerful (n + 2)
\end{lstlisting}
Because our \Powerful{} is byte-identical to upstream's, no translation
layer needs to be trusted; the relationship between the open statement
and our bounded theorems is itself proved, abstractly and in both useful
directions:

\begin{lstlisting}
/-- The upstream conjecture (were it proved) implies each bounded form. -/
theorem bounded_of_erdos364
    (h : ¬ ∃ n : ℕ, Nat.Powerful n ∧ Nat.Powerful (n + 1) ∧
      Nat.Powerful (n + 2)) (X : ℕ) :
    ∀ n : ℕ, n + 2 ≤ X →
      ¬ (Nat.Powerful n ∧ Nat.Powerful (n + 1) ∧ Nat.Powerful (n + 2))

/-- A witness below any bound would refute the conjecture outright. -/
theorem erdos364_false_of_witness {X n : ℕ} (_ : n + 2 ≤ X)
    (hn : Nat.Powerful n ∧ Nat.Powerful (n + 1) ∧ Nat.Powerful (n + 2)) :
    ∃ m : ℕ, Nat.Powerful m ∧ Nat.Powerful (m + 1) ∧ Nat.Powerful (m + 2)
\end{lstlisting}

The bounded theorems are thus exactly finite fragments of
\code{erdos\_364}: the same claim, restricted, in the same words. The
contrapositive shape is the direction a hypothetical counterexample
discovered by any future search would travel.

% ===========================================================================
\section{Limitations and comparison}\label{sec:limits}

We restate the two tiers. \emph{Certified:} no triple below $10^{14}$,
kernel-checked end to end. \emph{Uncertified, larger:} Johnson's
enumeration is exhaustive below $10^{22}$ but rests on undocumented code
\cite{OEISA060355}; the A076445 list reaches $\approx 7.38 \times
10^{28}$ but is exhaustive only if complete, which is not established
\cite{OEISA076445}; Alekseyev's extension reaches $\approx 8.1 \times
10^{66}$ and is explicitly not known to be exhaustive. Notably, none of
the $33$ known pairs across all tiers has a powerful middle, consistent
with the conjecture.

The certified bound is limited by kernel-evaluation cost, which our
measurements show scales primarily with value magnitude. The natural next
levers --- baked per-$b$ skip hints with a tolerant checker, narrower
top-end windows, and parallel fan-out --- were scoped during the
optimization pass; a $10^{16}$ rung appears plausible at roughly an order
of magnitude more kernel time, but we certify only what the measured cost
curve honestly supports. A complementary direction is certifying a
\emph{pair} enumeration (A060355-style, allowing even members) at lower
bounds, and contributing the bounded statements upstream to
\code{formal-conjectures}.

% ===========================================================================
\section{Conclusion}\label{sec:conclusion}

Finite verification cannot settle the Erd\H{o}s--Mollin--Walsh
conjecture. What it can do is be \emph{trustworthy}, and the standard for
computational evidence in number theory can rise from ``a program its
author trusts found nothing'' to ``a proof kernel with a three-axiom
footprint checked every step.'' Below $10^{14}$, the conjecture now meets
that standard: the enumeration of powerful numbers, the reduction to odd
pairs, the capture of every pair at distance $2$, and the refutation of
every candidate middle are theorems, with the statement pinned
byte-for-byte to the community's formalization of the open problem. The
architecture --- one parametric soundness theorem, thousands of small
dual-implementation certificates, a mechanical axiom gate, and a freeze
discipline binding evidence to sources --- is portable to other bounded
verifications of open conjectures, which we hope it will see.

\paragraph{Acknowledgements.}
The kernel-discipline enumeration technique adapts prior in-house
work on fueled kernel computation within the proof kernel. We thank the maintainers of \code{mathlib}
and of the \code{formal-conjectures} repository, as well as
T.~F.~Bloom for the Erd\H{o}s problem catalogue. Development was carried
out with the assistance of Claude (Anthropic).

% ===========================================================================

\appendix

% ===========================================================================
\section{Certified module inventory}\label{app:modules}

The certified closure comprises the following Lean modules (frozen as
attested by the committed axiom records). Line counts are for the
soundness library; the certificate sets are generated.

\begin{center}\small
\begin{tabular}{l l}
\toprule
module & role \\
\midrule
\code{Defs} & the upstream \Powerful{} vocabulary, byte-identical \\
\code{Spike} & fueled kernel primitives and the chunk checker \\
\code{Mod4} & step 1: the mod-$4$ opener reduction \\
\code{Representation} & step 2: odd powerful $= a^2 b^3$, $b$ squarefree \\
\code{Generator} & step 3: \code{isqrt}, squarefreeness, stream completeness \\
\code{Sorted} & step 4: merge invariants, adjacency, \code{checkChunk\_sound} \\
\code{Tiling} & step 5: the Boolean tiling fold and covering lemma \\
\code{Witness} & step 6: the seven middle kills (\cref{tab:witnesses}) \\
\code{Cert}, \code{C12/*} & $10^{12}$ chunk specs and $320$ certificates \\
\code{BTable}, \code{TableGen} & the base table and \code{checkChunkT\_sound} \\
\code{BTableData1e14}, \code{C14/*} & the $10^{14}$ table and $3{,}204$ certificates \\
\code{Assembly}, \code{Assembly14} & step 7: conditional headline theorems \\
\code{Main}, \code{Main14} & the unconditional theorems and axiom prints \\
\code{Bridge} & the statement-level bridge of \cref{sec:bridge} \\
\bottomrule
\end{tabular}
\end{center}

Instrumentation modules retained for the record ---
\code{BTableData1e8}, \code{BTableData1e12}, and \code{CostAttrib} ---
are not part of the certified closure.


\begin{thebibliography}{99}\small

\bibitem{Erdos1976}
P.~Erd\H{o}s,
\emph{Problems and results on number theoretic properties of consecutive
integers and related questions},
Proceedings of the Fifth Manitoba Conference on Numerical Mathematics
(1976), 25--44.

\bibitem{MollinWalsh1986}
R.~A.~Mollin and P.~G.~Walsh,
\emph{On powerful numbers},
International Journal of Mathematics and Mathematical Sciences
\textbf{9} (1986), 801--806.

\bibitem{Golomb1970}
S.~W.~Golomb,
\emph{Powerful numbers},
American Mathematical Monthly \textbf{77} (1970), 848--852.

\bibitem{Sentance1981}
W.~A.~Sentance,
\emph{Occurrences of consecutive odd powerful numbers},
American Mathematical Monthly \textbf{88} (1981), 272--274.

\bibitem{GuyB16}
R.~K.~Guy,
\emph{Unsolved Problems in Number Theory},
3rd ed., Springer, 2004, Problem B16.

\bibitem{Beckon2019}
E.~Beckon,
\emph{On consecutive triples of powerful numbers},
Rose-Hulman Undergraduate Mathematics Journal \textbf{20}(2) (2019).

\bibitem{Chan2025}
T.~H.~Chan,
\emph{A note on three consecutive powerful numbers},
Integers \textbf{25} (2025), \#A7. arXiv:2503.21485.

\bibitem{She2025}
J.~She,
\emph{Nonexistence of consecutive powerful triplets around cubes with
prime-square factors},
Integers \textbf{25} (2025), \#A103. arXiv:2507.16828.

\bibitem{ErdosProblems364}
T.~F.~Bloom,
\emph{Erd\H{o}s problems: problem 364},
\url{https://www.erdosproblems.com/364}.

\bibitem{OEISA001694}
OEIS Foundation,
\emph{A001694: Powerful numbers},
The On-Line Encyclopedia of Integer Sequences,
\url{https://oeis.org/A001694}.

\bibitem{OEISA060355}
OEIS Foundation,
\emph{A060355: Numbers $n$ such that $n$ and $n+1$ are powerful},
The On-Line Encyclopedia of Integer Sequences,
\url{https://oeis.org/A060355}. B-file enumeration by D.~Johnson (2011).

\bibitem{OEISA076445}
OEIS Foundation,
\emph{A076445: Pairs of powerful numbers differing by 2},
The On-Line Encyclopedia of Integer Sequences,
\url{https://oeis.org/A076445}. Terms by J.~McCranie (2002),
J.~Reynolds (2005), T.~D.~Noe (2006); conjectured extension by
M.~Alekseyev (2012).

\bibitem{Lean4}
L.~de~Moura and S.~Ullrich,
\emph{The Lean 4 theorem prover and programming language},
in Automated Deduction -- CADE 28, LNCS 12699, Springer, 2021, 625--635.

\bibitem{mathlib}
The mathlib Community,
\emph{The Lean mathematical library},
in Proceedings of CPP 2020, ACM, 2020, 367--381.

\bibitem{FormalConjectures}
Google DeepMind,
\emph{formal-conjectures: a repository of formalized open problems},
\url{https://github.com/google-deepmind/formal-conjectures},
file \code{FormalConjectures/ErdosProblems/364.lean},
commit \code{e923379e609b9d5987011a1d1f06ec22ea25cd20} (2026-07-21).

\bibitem{Gonthier2008}
G.~Gonthier,
\emph{Formal proof --- the Four-Color Theorem},
Notices of the American Mathematical Society \textbf{55} (2008),
1382--1393.

\bibitem{Hales2017}
T.~Hales et al.,
\emph{A formal proof of the Kepler conjecture},
Forum of Mathematics, Pi \textbf{5} (2017), e2.

\end{thebibliography}
\end{document}